\newtheorem{theorem}{Theorem}
\newtheorem{proposition}{Proposition}
\theoremstyle{definition}
\newtheorem{example}{Example}
\newcommand{\SL}[1]{\ensuremath{\mathrm{SL}_{#1}}}
\newcommand{\Z}{\mathbb{Z}}
\newcommand{\Q}{\mathbb{Q}}
\newcommand{\bx}[1]{\raisebox{2pt}{\fcolorbox{black}{#1}{\rule{0pt}{2pt}\rule{2pt}{0pt}}}}
\definecolor{one}{RGB}{255,85,85}
\definecolor{nonz}{RGB}{255,255,0}
\definecolor{mone}{RGB}{255,170,85}
\begin{document}

\begin{frontmatter}

\title{Wildest \SL2-tilings}
\author{Andrei Zabolotskii}
\ead{andrei.zabolotskii@open.ac.uk}
\affiliation{
organization={School~of~Mathematics~and~Statistics, The~Open~University},
city={Milton~Keynes},
postcode={MK7~6AA},
country={United~Kingdom}
}

\begin{abstract}
Tame \SL2-tilings are related to Farey graph and friezes; much less is known about wild (not tame) \SL2-tilings. In this note, we demonstrate \SL2-tilings that are maximally wild: we prove that the maximum wild density of an integer \SL2-tiling is $\tfrac25$ and present \SL2-tilings over $\Z/N\Z$ with wild density~1.
\end{abstract}

\begin{keyword}
wild \SL2-tilings \sep commutative rings


\end{keyword}

\end{frontmatter}

\section{Introduction}
An \emph{\SL2-tiling} over a unital, commutative ring $R$ is a bi-infinite matrix with elements $m_{i,j}\in R$ indexed by $i,j\in\Z$, such that for all $i,j$,
\begin{equation}
\det
\begin{pmatrix}
m_{i,j} & m_{i,j+1} \\
m_{i+1,j} & m_{i+1,j+1}
\end{pmatrix}
= 1.
\label{eq:sl2}
\end{equation}

An \SL2-tiling is called \emph{tame} if, for all $i,j$,
\begin{equation}
\det
\begin{pmatrix}
m_{i\!-\!1\!,j\!-\!1} & m_{i\!-\!1\!,j} & m_{i\!-\!1,\!j\!+\!1} \\
m_{i,\phantom{\!\!-\!1}j\!-\!1} & m_{i,\phantom{\!\!\!-1}j} & m_{i,\phantom{\!\!\!-1}j\!+\!1} \\
m_{i\!+\!1\!,j\!-\!1} & m_{i\!+\!1\!,j} & m_{i\!+\!1\!,j\!+\!1} \\
\end{pmatrix}
 = 0.
\label{eq:tame}
\end{equation}
An \SL2-tiling that is not tame is called \emph{wild}. An entry $m_{i,j}$ of a given \SL2-tiling such that Eq.~\ref{eq:tame} does not hold will be called a \emph{wild entry}.

\begin{example}
\label{ex:unit}
Our basic example of a tame \SL2-tiling is the anti-periodic tiling of the integer plane by the $2\times2$ identity matrix.
\[
\begin{BMAT}(e){ccccccccc}{ccccccccc}
& & & & \vdots & & & & \\
& 0 & 1 & 0 & -1 & 0 & 1 & 0 & \\
& -1 & 0 & 1 & 0 & -1 & 0 & 1 & \\
& 0 & -1 & 0 & 1 & 0 & -1 & 0 & \\ \cdots
& 1 & 0 & -1 & 0 & 1 & 0 & -1 & \cdots \\
& 0 & 1 & 0 & -1 & 0 & 1 & 0 & \\
& -1 & 0 & 1 & 0 & -1 & 0 & 1 & \\
& 0 & -1 & 0 & 1 & 0 & -1 & 0 & \\
& & & & \vdots & & & & \\
\end{BMAT}
\]
\end{example}

\SL2-tilings were introduced in \cite{AsReSm2010}. Their classification into tame and wild was introduced in \cite{slk}.
Tame \SL2-tilings have been described in terms of paths in the Farey graph \cite{sl2short,OUmodular}.
\SL2-tilings bounded by two diagonals of zeros are known as the frieze patterns, or friezes. They are related to the combinatorics of cluster algebras \cite{AsReSm2010} as well as to many others topics. However, these relations have only been explored essentially for the case of positive frieze patterns, which are always tame.

Wild \SL2-tilings are much less understood. After \cite{slk}, there is only one work that considers them \cite{wildCuntz}. In it, a graph that helps to describe \SL k-friezes is introduced; for $k=2$, the vertices of that graph are all possible rows of a frieze pattern and edges connect the rows which can be adjacent in a frieze.

With tame \SL2-tilings being well understood, we turn to the other end of the spectrum and ask: what is the wildest possible \SL2-tiling, maximally different from tame ones? There are many ways to give this question a precise meaning. Some of them rely on a combinatorial model for arbitrary \SL2-tilings generalising that of the tame tilings, based on the Farey graph; such model will be presented elsewhere. Here we focus on the simplest possible interpretation: what is the greatest possible \emph{wild density} of an \SL2-tiling $M$, defined as
\begin{equation}
\label{eq:density}
\limsup_{r\to\infty}\frac{\text{number of wild entries in $M$ within distance $r$ from the origin}}{\text{total number of entries in $M$ within distance $r$ from the origin}}.
\end{equation}

For example, the \SL2-tiling shown in \cite[Eq.~(3)]{slk} has wild density $\tfrac14$.

The answers to our problem are given by Theorem~\ref{thm:max25} and Example~\ref{ex:max1}.

\section{The wildest integer \SL2-tiling}

Let $\begin{pmatrix}a&b&c\\d&e&f\\g&h&i\end{pmatrix}$ be a $3\times3$ block in an \SL2-tiling over the integers, $\Z$. Then it follows from Dodgson's condensation and the definition of an \SL2-tiling that
\begin{equation}
\label{eq:dod}
e\cdot\det\begin{pmatrix}a&b&c\\d&e&f\\g&h&i\end{pmatrix}=0.
\end{equation}
Therefore, if $e$ is a wild entry, it must necessarily be equal to~0. For this reason, within this section, we will refer to wild entries as \emph{wild zeros} and to other zero entries as \emph{tame zeros}.

Furthermore, in any \SL2-tiling,
\begin{equation}
\label{eq:det3}
\det\begin{pmatrix}a&b&c\\d&e&f\\g&h&i\end{pmatrix}=(a+c+g+i)+(cg-ai)e.
\end{equation}
Thus, if $e=0$ then the determinant equals the sum of the four corner entries of the matrix $a+c+g+i$.

Any zero entry, be it wild or tame, is necessarily surrounded by 1s and $-1$s in one of the two possible configurations:
\[
\begin{BMAT}(e){ccc}{ccc}
\cdot&1&\cdot\\
-1&0&1\\
\cdot&-1&\cdot
\end{BMAT}
\quad\text{or}\quad
\begin{BMAT}(e){ccc}{ccc}
\cdot&-1&\cdot\\
1&0&-1\\
\cdot&1&\cdot
\end{BMAT}
\]
Also, of the four diagonally adjacent neighbours of a wild zero, at least one must be nonzero, otherwise the $3\times3$ determinant becomes zero due to Eq.~\ref{eq:det3}, and the central zero entry becomes tame.

The set $\Z^2$ of indices of entries of an \SL2-tiling is a subset of the plane $\mathbb{R}^2$, which makes it natural to associate each tiling entry with an integer point on the plane, or with a square tile centred at that point. We will use this both in the reasoning (in particular, Eq.~\eqref{eq:density} refers to the distance in this sense) and the visualisation, always assigning the colour black to the tiles with a wild entry in the middle and other colours to other tiles. The wild density of a given \SL2-tiling is the same as the density of the colour black in the plane.

\pagebreak
\begin{example}
\label{ex:wildestZ}
The tiling from Example~\ref{ex:unit} modified by adding the nonzero elements $a_j$ (left) visualised as yellow tiles (right), arranged in an index-10 square sublattice of $\Z^2$. Thus, the coloured pattern is doubly-periodic. The \SL2-tiling itself, however, can be non-periodic, since each yellow tile corresponds to an arbitrary nonzero entry $a_j$, independent of the others. All zeros are wild.

\vskip2mm
\noindent
\begin{tabular}{m{0.5\textwidth}m{0.4\textwidth}}
\(
\small
\begin{BMAT}(e)[1pt]{cccccccccccccc}{cccccccccccccc}
& & & &&&& \rotatebox{90}{...}
&&& & & & \\
& 0 & -\!1 & 0 & 1 & 0 & -\!1 & \raisebox{0.3ex}{$a_1$} & 1 & 0 & -\!1 & 0 & 1 & \\
& 1 & 0 & -\!1 & \raisebox{0.3ex}{$a_2$} & 1 & 0 & -\!1 & 0 & 1 & 0 & -\!1 & 0 & \\
& \raisebox{0.3ex}{$a_3$} & 1 & 0 & -\!1 & 0 & 1 & 0 & -\!1 & 0 & 1 & \raisebox{0.3ex}{$a_4$} & -\!1 & \\
& -\!1 & 0 & 1 & 0 & -\!1 & 0 & 1 & \raisebox{0.3ex}{$a_5$} & -\!1 & 0 & 1 & 0 & \\
& 0 & -\!1 & 0 & 1 & \raisebox{0.3ex}{$a_6$} & -\!1 & 0 & 1 & 0 & -\!1 & 0 & 1 & \\ ...
& 1 & \raisebox{0.3ex}{$a_7$} & -\!1 & 0 & 1 & 0 & -\!1 & 0 & 1 & 0 & -\!1 & \raisebox{0.3ex}{$a_8$} & ... \\
& 0 & 1 & 0 & -\!1 & 0 & 1 & 0 & -\!1 & \raisebox{0.3ex}{$a_9$} & 1 & 0 & -\!1 & \\
& -\!1 & 0 & 1 & 0 & -\!1 & \raisebox{0.3ex}{$a_{\!1\!0}$} & 1 & 0 & -\!1 & 0 & 1 & 0 & \\
& 0 & -\!1 & \raisebox{0.3ex}{$a_{\!1\!1}$} & 1 & 0 & -\!1 & 0 & 1 & 0 & -\!1 & 0 & 1 & \\
& 1 & 0 & -\!1 & 0 & 1 & 0 & -\!1 & 0 & 1 & \raisebox{0.3ex}{$a_{\!1\!2}$} & -\!1 & 0 & \\
& 0 & 1 & 0 & -\!1 & 0 & 1 & \raisebox{0.3ex}{$a_{\!1\!3}$} & -\!1 & 0 & 1 & 0 & -\!1 & \\
& -\!1 & 0 & 1 & \raisebox{0.3ex}{$a_{\!1\!4}$} & -\!1 & 0 & 1 & 0 & -\!1 & 0 & 1 & 0 & \\
& & & &&&& \rotatebox{90}{...}
&&& & & & \\
\end{BMAT}
\)
&
\raisebox{-.56\height}{
\includegraphics[height=0.425\columnwidth]{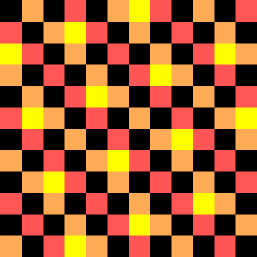}
}
\\
&
\small
~~\mbox{\bx{one}~$1$ \quad \bx{mone}~$-\!1$ \quad \bx{nonz}~$\ne0$ \quad \bx{black}~$0$ (wild)}
\end{tabular}

\end{example}

\begin{theorem}
\label{thm:max25}
The maximal wild density of an \SL2-tiling over the integers is $\tfrac25$, which is achieved by Example~\ref{ex:wildestZ}.
\end{theorem}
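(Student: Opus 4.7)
The plan is to establish the lower bound by verifying Example~\ref{ex:wildestZ} directly, and the upper bound by a counting argument that leverages the structural constraints on wild zeros.

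For the lower bound, I would check three things about Example~\ref{ex:wildestZ}. First, the tiling is a valid \SL2-tiling for any choice of nonzero integers $a_j$: each $2\times 2$ block containing a yellow entry has its diagonal partner at one of the zero positions of Example~\ref{ex:unit}, so $a_j$ is multiplied by $0$ in the determinant formula and the $2\times 2$ determinant remains $1$. Second, every non-yellow zero is wild: by Eq.~\eqref{eq:det3} with centre $e=0$, the $3\times 3$ determinant equals the sum of the four diagonal corner entries, and since the yellow sublattice forms a Lee-type perfect code of density $1/5$ in the zero-parity sublattice (isomorphic to $\Z^2$ with its $4$-nearest-neighbour graph), each non-yellow zero has exactly one yellow diagonal neighbour, so the $3\times 3$ determinant equals $a_j\ne 0$. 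Third, the density computation: the zero-parity positions have density $1/2$ and the yellows have density $1/10$, so wild zeros have density $1/2-1/10=2/5$.

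For the upper bound I would use three local facts from the excerpt: a wild zero has value $0$ (from Eq.~\eqref{eq:dod} in an integral domain); its four orthogonal neighbours are $\pm 1$ in the alternating pattern shown; and at least one of its four diagonal corners is nonzero (from Eq.~\eqref{eq:det3} with $e=0$). Since diagonal adjacency preserves the parity of $i+j$, each parity sublattice is independently constrained: the nonzero entries of that parity must dominate the wild zeros of that parity in the $4$-nearest-neighbour graph of the sublattice. By the classical density bound on dominating sets in $\Z^2$ with this adjacency (minimum $1/5$, attained by the Lee perfect code $\{(x,y):2x+y\equiv 0\bmod 5\}$), the wild zeros of each parity have density at most $4/5$ of that sublattice, equivalently at most $2/5$ of $\Z^2$. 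This already settles the bound when all zeros lie in a single parity class, as in the extremizing example.

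The hard part is ruling out wild density strictly above $2/5$ when zeros occupy both parity classes, since the naive sum of per-parity bounds is only $4/5$. I would close this gap by a double count of (wild zero, nonzero) incidences in the Moore $8$-cell neighbourhood: each wild zero contributes at least five such incidences (four orthogonal $\pm 1$ plus at least one diagonal nonzero), while each nonzero entry has at most four, because the zeros among its eight neighbours form an independent set in an induced $8$-cycle (maximum independent set of size $4$). Combined with the independence of zeros in $\Z^2$ (total zero density $\leq 1/2$), this incidence bound should suffice to pull the combined wild density back to $2/5$, but ruling out intermediate ``bimodal'' configurations will likely require additional use of the \SL2-structure --- for instance, that $\pm 1$ entries forced by orthogonal adjacency to zeros of one parity cannot also occupy the perfect-dominating-set positions needed for maximal wildness in the other parity.
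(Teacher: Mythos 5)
Your lower-bound verification is sound and is essentially what the paper relies on: Example~\ref{ex:wildestZ} is obtained from Example~\ref{ex:unit} by replacing the zeros lying on a perfect Lee code of the even sublattice with nonzero entries $a_j$; the $2\times2$ determinants are unaffected because each $a_j$ is paired diagonally with a $0$; every remaining zero acquires exactly one nonzero corner, so Eq.~\eqref{eq:det3} makes it wild; and the density is $\tfrac12-\tfrac1{10}=\tfrac25$.

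The upper bound, however, has a genuine gap, which you locate yourself but do not repair. Your per-parity domination bound yields at most $\tfrac25$ of the plane \emph{from each parity class separately}, hence only $\tfrac45$ in total; and your Moore-neighbourhood double count yields $5W\le 4N\le 4(1-W)$, i.e.\ $W\le\tfrac49>\tfrac25$. Neither closes the theorem, and the ``additional use of the \SL2-structure'' you defer to is precisely the missing content. Note also that the slack is structural, not a matter of constants: in the extremal example each wild zero has exactly $5$ nonzero Moore-neighbours, but the nonzero cells carry on average only $\tfrac{20}{6}=\tfrac{10}{3}$ wild-zero neighbours, so the inequality $I\le 4N$ is far from saturated there; in charge terms your scheme only guarantees each wild zero a charge of $\tfrac54$ (five incidences against a per-cell capacity of four), whereas reaching $\tfrac25$ requires a guaranteed charge of $\tfrac32$ per wild zero with no cell discharging more than $1$. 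The paper avoids the parity split altogether by a geometric packing argument: each wild zero is assigned a pentagon of area $\tfrac52$ built from its own unit square together with pieces of its four edge-neighbours (forced to be $\pm1$) and of one nonzero corner-neighbour; these pentagons are pairwise disjoint, giving density at most $1\big/\tfrac52=\tfrac25$ directly, and the Cairo pentagonal tessellation formed by Example~\ref{ex:wildestZ} shows the packing is tight. To salvage your combinatorial route you would need a discharging rule mimicking that packing --- e.g.\ weight $\tfrac14$ from each edge-neighbour and $\tfrac12$ from a suitably chosen nonzero corner --- together with an argument that no cell is over-claimed, which is exactly the non-overlap statement for the pentagons and is where the real work lies.
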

\begin{proof}
Due to the necessary conditions on the neighbourhood of a wild zero, the 4 immediately adjacent tiles and at least 1 diagonally adjacent tile of every black tile are not black. Then the wild zero in that black tile must be surrounded by the image of the following pentagonal shape under rotation by a multiple of $\pi/2$, without overlapping with the pentagons from other wild zeros:
\begin{figure}[h!]
\centering
\includegraphics[height=0.2\columnwidth]{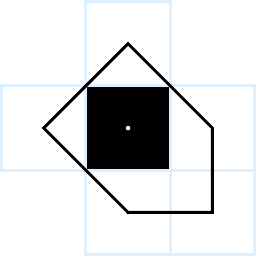}
\end{figure}

This pentagon contains the entire black square, the rest is filled with white (representing any non-black colour), and the density of black is $\tfrac25$. Therefore, the density of black in the entire plane cannot be greater than this value.

On the other hand, in an \SL2-tiling of the form shown in Example~\ref{ex:wildestZ}, the pentagonal neighbourhoods of wild zeros tessellate the plane without gaps, forming a Cairo pentagonal tessellation, attaining the upper bound for the wild density:

\pagebreak
\begin{figure}[h!]
\centering
\includegraphics[height=0.3\columnwidth]{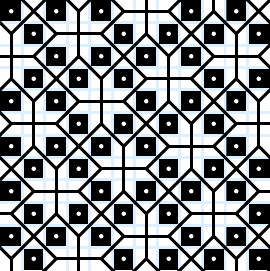}
\end{figure}

\end{proof}

In the rest of this section, we will treat Example~\ref{ex:wildestZ} as a specific \SL2-tiling over $\Q(a_1,a_2,\ldots)$ with $a_j$ being different formal variables.

Note that tameness also manifests through bounded rank: in a~tame \SL2-tiling no block has rank more than~2, when interpreted as a matrix over $\Q$.
In fact, the tilings shown in Example~\ref{ex:wildestZ} is ``maximally wild'' in this sense too: the rank of any block in it is almost maximal.

\begin{proposition}
In the \SL2-tiling over $\Q(a_1,a_2,\ldots)$ shown in Example~\ref{ex:wildestZ}, any block has rank deficiency at most~2.
\end{proposition}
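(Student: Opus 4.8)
The plan is to compute the rank over the field $K=\Q(a_1,a_2,\dots)$ by way of the decomposition $m=m^{0}+A$, where $m^{0}$ is the tiling of Example~\ref{ex:unit} and $A$ carries the inserted variables $a_{i,j}$ at the sublattice positions. Since the $a_{i,j}$ are algebraically independent, the rank of any block equals its rank at a generic specialisation, so it suffices to exhibit one non-vanishing minor of the desired size, and I may keep track of which monomials in the $a_{i,j}$ survive in a given determinant. The base tiling is antiperiodic, $m^{0}_{i+2,j}=-m^{0}_{i,j}=m^{0}_{i,j+2}$, so every block of $m^{0}$ has rank at most $2$; explicitly $m^{0}_{i,j}=-\sin\tfrac{\pi}{2}(i-j)$ gives a rank-two factorisation $m^{0}=u_0v_0^{\top}+u_1v_1^{\top}$ with $\{0,\pm1\}$-valued vectors $u_k,v_k$. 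This already yields the easy two-sided estimate $\mathrm{rank}(A)-2\le\mathrm{rank}(m)\le\mathrm{rank}(A)+2$; the work is to upgrade the lower bound to $\min(p,q)-2$.

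First I would analyse $A$ itself. Its support is cut out by $j+3i\equiv 6\pmod{10}$, so in a $p\times q$ block row $i$ meets the $a$-positions in the single residue class $j\equiv 6-3i$, and column $j$ in the single class $i\equiv 2+3j$, both mod $10$. Hence the bipartite support graph is a disjoint union, over residues $\rho\in\Z/10\Z$, of complete bipartite graphs on the rows $R_\rho$ and columns $C_\rho$ of that class, and by the matching description of generic rank $\mathrm{rank}(A)=\sum_\rho\min(|R_\rho|,|C_\rho|)$. Writing $d=\min(p,q)$, a short count then gives $d-\mathrm{rank}(A)=\sum_\rho\max(0,|R_\rho|-|C_\rho|)\le 4$: a deficient residue is one whose row-class is heavy while its column-class is light, and since the heavy column-residues form an interval of $\Z/10\Z$ while the heavy row-residues form the image of an interval under multiplication by $3^{-1}\equiv7$ — a set that is never an interval, hence never the complement of one — the two heavy families must overlap enough to cap the sum at $4$, the value $4$ occurring only for $p=q\equiv5\pmod{10}$.

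The substantive step is to recover the missing directions from the $\pm1$ entries of $m^{0}$, proving $\mathrm{rank}(m)\ge\mathrm{rank}(A)+\min\!\big(2,\,d-\mathrm{rank}(A)\big)$, which with the bound above gives $\mathrm{rank}(m)\ge d-2$. Starting from a maximum matching of $A$ saturating rows $I_0$ and columns $J_0$, I would pass to the submatrix on $I_0\cup\{i_1,i_2\}$ and $J_0\cup\{j_1,j_2\}$ with $i_1,i_2$ unsaturated rows and $j_1,j_2$ unsaturated columns. By the factorisation, the base $2\times2$ block on $\{i_1,i_2\}\times\{j_1,j_2\}$ is invertible exactly when $i_1,i_2$ have opposite parity and $j_1,j_2$ have opposite parity; using that the heavy residue classes are (dilated) intervals one checks that, once the deficiency is at least two, the unsaturated rows and the unsaturated columns each realise both parities, so such a choice exists. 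The top-degree part in the $a_{i,j}$ of the determinant of this augmented submatrix is the matching monomial times the nonzero base $2\times2$ determinant, and a leading-monomial argument shows this term cannot be cancelled by any other permutation; hence the minor is nonzero and the rank gains the full $2$. The narrow blocks $d<10$ are handled by the same argument applied after transposition, since the defining congruence and the antiperiodicity are symmetric in rows and columns.

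The main obstacle is exactly this recovery step. The easy bounds $\mathrm{rank}(A)\pm2$ are not enough: when the $a$-matching is deficient by up to $4$ one must genuinely produce two extra independent pivots out of a fixed, highly degenerate (rank-two) pattern. Two points will demand care and must be verified uniformly in the block offsets: that two unsaturated rows and two unsaturated columns of the required opposite parities can always be selected (this rests on the parity distribution within the heavy residue classes, and on checking the smaller-deficiency cases as well), and that the claimed leading monomial really survives, i.e. that the interaction between the constant $\pm1$ entries and the generic monomials produces no cancellation. Making this non-vanishing rigorous, rather than settling for the coarse inequalities, is where the real difficulty lies.
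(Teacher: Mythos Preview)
Your route is genuinely different from the paper's. The paper never decomposes $m=m^{0}+A$ or computes the matching rank of the sparse part; instead it proves a reduction lemma: any block of size at least $10\times10$ can be written as $\left(\begin{smallmatrix}A&B\\C&D\end{smallmatrix}\right)$ with $A$ a $10\times10$ block containing exactly one variable in each row and each column, and then any nonzero $k\times k$ minor of $D$ extends to a nonzero $(k+10)\times(k+10)$ minor of the full block, because the latter determinant has a term $\pm a_1\cdots a_{10}$ times the former. Iterating peels off ten rows and ten columns at a time without raising the rank deficiency, so everything reduces to checking blocks of size below $10$, of which there are only three or four equivalence classes per size. All the ``hard'' content is thus pushed into a finite verification.

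Your plan is coherent in outline, and the leading-monomial mechanism you invoke is essentially the same one the paper uses in its reduction step. But the gaps you flag are genuine, not cosmetic. First, the claim that $d-\operatorname{rank}(A)\le 4$ needs an actual proof; the sentence ``the heavy row-residues form the image of an interval under multiplication by $7$ --- a set that is never an interval'' is false for short or full-length intervals and in any case does not directly bound the sum, nor does it cover rectangular blocks. Second, and more seriously, the recovery step hinges on always being able to choose two unsaturated rows of opposite parity and two unsaturated columns of opposite parity. Unsaturated rows lie in a small set of residue classes mod $10$, and each such class has a fixed parity; you must therefore show that whenever the deficiency is large enough the deficient residue classes on each side realise both parities, uniformly over all offsets and aspect ratios. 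That is a case analysis of at least the same order as the paper's small-block check, but without the clean inductive wrapper --- and you must also verify that the four positions $(i_k,j_l)$ carry $m^0$-entries rather than stray $a$'s, or else your ``complementary $2\times2$'' coefficient is not what you claim. The paper's shaving argument sidesteps all of this by never needing to control which rows or columns are unsaturated.
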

\begin{proof}
Let $M$ be the \SL2-tiling from Example~\ref{ex:wildestZ}.
For any given odd (resp.\ even) nonnegative integer $n$, there are only 4 (resp.\ 3) equivalence classes of $n\times n$ blocks in $M$ under relabelling the variables $a_j$, rotations, reflections, and flipping the signs of all $\pm1$s. For example, the four inequivalent $5\times5$ blocks are shown below.
\[
\begin{BMAT}(e)[1pt]{ccccc}{ccccc}
1 & 0 & -\!1 & a_1 & 1  \\
 a_2 & 1 & 0 & -\!1 & 0  \\
-\!1 & 0 & 1 & 0 & -\!1  \\
0 & -\!1 & 0 & 1 & a_3 \\
1 & a_4 & -\!1 & 0 & 1
\end{BMAT}
\qquad
\begin{BMAT}(e)[1pt]{ccccc}{ccccc}
0 & -\!1 & a_1 & 1 & 0 \\
1 & 0 & -\!1 & 0 & 1  \\
 0 & 1 & 0 & -\!1 & 0  \\
-\!1 & 0 & 1 & a_2 & -\!1  \\
a_3 & -\!1 & 0 & 1 & 0
\end{BMAT}
\qquad
\begin{BMAT}(e)[1pt]{ccccc}{ccccc}
1 & 0 & -\!1 & 0 & 1  \\
0 & 1 & 0 & -\!1 & 0  \\
-\!1 & 0 & 1 & a_1 & -\!1  \\
a_2 & -\!1 & 0 & 1 & 0 \\
1 & 0 & -\!1 & 0 & 1
\end{BMAT}
\qquad
\begin{BMAT}(e)[1pt]{ccccc}{ccccc}
0 & -\!1 & 0 & 1 & 0 \\
1 & 0 & -\!1 & 0 & 1  \\
 0 & 1 & a_1 & -\!1 & 0  \\
-\!1 & 0 & 1 & 0 & -\!1  \\
0 & -\!1 & 0 & 1 & 0
\end{BMAT}
\]
They have rank deficiencies resp.\ 0, 0, 1, and 2. The last case ensures that the bound in the proposition is sharp.

In each of the three inequivalent $10\times10$ blocks (which are all present in the $12\times12$ block shown in Example~\ref{ex:wildestZ}, e.g.\ as the top-left, central, and bottom-right $10\times10$ subblocks), there is exactly one $a_j$ in each row and in each column.
Consider a block $M_0$ in $M$ of size at least $10\times10$. Represent it as a block matrix $M_0=\left(
\begin{array}{c|c}
A &B \\ \hline
C &D
\end{array}\right)$ with $A$ being $10\times10$; label the 10 elements of the form $a_j$ in $A$ as $a_1,\ldots,a_{10}$. For any minor in $D$ with a nonzero minor determinant $d$, there is another minor in $M_0$ formed by including the topmost 10 rows and the leftmost 10 columns of $M_0$ which is also nonzero because its minor determinant has a term proportional to $a_1\cdot\ldots\cdot a_{10}$ with coefficient $\pm d\ne0$. Therefore, the rank deficiency of $M_0$ is no greater than the rank deficiency of $D$.

Taking any block in $M$, we can repeatedly use this argument to shave off the topmost 10 rows and the leftmost 10 columns (not lowering the rank deficiency in the process) until a block with less than 10 rows or columns is left. We can replace it by any of its maximal square subblocks, as this will not lower the rank deficiency. It only remains to verify that any $n\times n$ block in $M$ with $n<10$ has rank deficiency at most~2. By the exhaustive check of inequivalent cases (there is a small number of them, as noted above), this is true.

\end{proof}

\section{The wildest \SL2-tiling ever}
As the previous section shows, one cannot obtain a wild density greater than $\tfrac25$ in an \SL2-tiling over the integers, or in fact over any integral domain. Now we consider \SL2-tilings over a unital, commutative ring $R$, possibly containing zero divisors. Suppose we are free to choose the ring $R$, trying to maximise the wild density. Then the following example solves the problem.

\begin{example}
\label{ex:pqrs}
Let $p,q,r,s$ be integers greater than 1 such that $ps-qr=1$. Let $N=pqrs$, $\alpha=qr-1$ and $\beta=ps-1$. Consider the ring $R=\Z/N\Z$ and the following bi-infinite matrix formed by repeating a $4\times4$ block periodically in both directions, where each entry is taken modulo $N$ and interpreted as an element of $R$:
\[
\begin{BMAT}(e){l1rrrr1rrrr1c}{t1cccc1cccc1c}
&&&&\vdots&&&&&\\
& p & q & -p & -q & p & q & -p & -q & \\
& r & s & -r & -s & r & s & -r & -s& \\
& \alpha p & \beta q & p & q& \alpha p & \beta q & p & q&\\
\ldots& \beta r & \alpha s & r & s& \beta r & \alpha s & r & s&\ldots\\
& p & q & -p & -q & p & q & -p & -q & \\
& r & s & -r & -s & r & s & -r & -s &\\
& \alpha p & \beta q & p & q& \alpha p & \beta q & p & q&\\
& \beta r & \alpha s & r & s& \beta r & \alpha s & r & s&\\
&&&&\vdots&&&&&
\end{BMAT}
\]
This is an \SL2-tiling. Moreover, each $3\times3$ determinant is equal to one of $pqr,pqs,prs,qrs$ (as elements of $R$) and thus is nonzero in $R$. That is, this is an \SL2-tiling over $R$ with wild density equal to~1.
\end{example}

The smallest possible $N$ in Example~\ref{ex:pqrs} is $N=72$, achieved for $p=s=3$, $q=2$, $r=4$. However, wild density~1 can be achieved even for a smaller $N$ if the \SL2-tiling is slightly modified, as our final example shows.

\begin{example}
\label{ex:max1}
In the following \SL2-tiling over $\Z/36\Z$, each entry is wild.
\[
\begin{BMAT}(e){c1cccc1cccc1c}{t1cccc1cccc1c}
& &&& \vdots&& & & & \\
& 3 & 2 & 33 & 34 & 3 & 2 & 33 & 34 & \\
& 4 & 3 & 32 & 33 & 4 & 3 & 32 & 33 & \\
& 9 & 16 & 3 & 2 & 9 & 16 & 3 & 2& \\
\ldots& 14 & 9 & 4 & 3 & 14 & 9 & 4 & 3& \ldots\\
& 3 & 2 & 33 & 34 & 3 & 2 & 33 & 34 & \\
& 4 & 3 & 32 & 33 & 4 & 3 & 32 & 33 & \\
& 9 & 16 & 3 & 2 & 9 & 16 & 3 & 2& \\
& 14 & 9 & 4 & 3 & 14 & 9 & 4 & 3& \\
&&& & \vdots &&& & & \\
\end{BMAT}
\]
\end{example}

\section*{Acknowledgements}
This research is supported by EPSRC grant EP/W524098/1.
I am grateful to Ian Short, Matty van Son, and Oleg Karpenkov for useful discussions and to Fran\c{c}ois Bergeron for pointing out a relevant reference.

\bibliography{wildest}{}
\bibliographystyle{plain}

\end{document}